\title{\textbf{The Picard Group of Vertex Affinoids in the First Drinfeld Covering
}}
\author{James Taylor}
\email{james.taylor@maths.ox.ac.uk}
\address{Mathematical Institute, University of Oxford, Oxford, OX2 6GG, UK}
\date{\today}
\subjclass[2020]{11S37, 14G22 (Primary); 14C22 (Secondary).}
\keywords{Picard group, Drinfeld tower, Deligne-Lusztig variety}
\newcommand{\OO}{\mathcal{O}}
\DeclareMathOperator{\Hom}{Hom}
\DeclareMathOperator{\GL}{GL}
\DeclareMathOperator{\SL}{SL}
\DeclareMathOperator{\Sp}{Sp}
\DeclareMathOperator{\Spec}{Spec}
\DeclareMathOperator{\Proj}{Proj}
\DeclareMathOperator{\Spf}{Spf}
\DeclareMathOperator{\Pic}{Pic}
\DeclareMathOperator{\Cl}{Cl}
\DeclareMathOperator{\etale}{\acute{e}t}
\newcommand{\fn}[3]{#1 : #2 \rightarrow #3}
\newcommand{\bC}{{\mathbb C}}
\newcommand{\bF}{{\mathbb F}}
\newcommand{\bG}{{\mathbb G}}
\newcommand{\bP}{{\mathbb P}}
\newcommand{\bQ}{{\mathbb Q}}
\newcommand{\bZ}{{\mathbb Z}}
\newcommand{\bfW}{{\mathbf W}}
\newcommand{\bfY}{{\mathbf Y}}
\newcommand{\bfZ}{{\mathbf Z}}
\newcommand{\sD}{{\mathcal D}}
\newcommand{\sM}{{\mathcal M}}
\newcommand{\sP}{{\mathcal P}}
\newcommand{\sT}{{\mathcal T}}
\newtheorem{thm}{Theorem}
\numberwithin{thm}{section}
\newtheorem{lemma}[thm]{Lemma} 
\newtheorem{prop}[thm]{Proposition} 
\newtheorem{cor}[thm]{Corollary}
\theoremstyle{definition}
\newtheorem{defn}[thm]{Definition}
\theoremstyle{remark} 
\newtheorem*{remark}{Remark}
\begin{document}
\begin{abstract}
Let $F$ be a finite extension of $\bQ_p$. Let $\Omega$ be the Drinfeld upper half plane, and $\Sigma^1$ the first Drinfeld covering of $\Omega$. We study the affinoid open subset $\Sigma^1_v$ of $\Sigma^1$ above a vertex of the Bruhat-Tits tree for $\GL_2(F)$. Our main result is that $\Pic(\Sigma^1_v)[p] = 0$, which we establish by showing that $\Pic(\bfY)[p] = 0$ for $\bfY$ the Deligne-Lusztig variety of $\SL_2(\bF_q)$.
One formal consequence is a description of the representation $H^1_{\etale}(\Sigma^1_v, \bZ_p(1))$ of $\GL_2(\OO_F)$ as the $p$-adic completion of $\OO(\Sigma^1_v)^\times$.
\end{abstract}

\maketitle

\section{Introduction}

Let $p$ be a prime, $F$ a finite extension of $\bQ_p$, and $K$ the completion of the maximal unramified extension of $F$. Let $\sM_0$ be the disjoint union of $\mathbb{Z}$ copies of $\Omega$, where $\Omega$ is the \emph{Drinfeld upper half plane}: the rigid analytic space over $K$ defined by removing all $F$-rational points from $\bP^{1,\text{an}}_K$. The work of Drinfeld \cite{DRI} implies the existence of a tower of finite \'{e}tale coverings $(\sM_{n})_{n \geq 0}$ of $\sM_0$ equipped with compatible actions of $\GL_2(F)$, which has been shown to realise both the local Langlands and Jacquet-Langlands correspondence in its \'{e}tale cohomology \cite{CAR, HAR, BOY, HARTAY}. On the other hand, there is at present no formulated $p$-adic local Langlands correspondence for $\GL_2(F)$ for general finite extensions $F$. The Drinfeld tower is expected to be of importance in yielding natural representations of $\GL_2(F)$ that should appear in any such correspondence. For example, the geometric $p$-adic \'{e}tale cohomology of the Drinfeld tower has been shown to encode the $p$-adic local Langlands correspondence for $F = \bQ_p$ \cite{CDN1}.

The preimage of the index zero piece $\Omega \hookrightarrow \sM_0$ in the tower $(\sM_n)_{n \geq 0}$ defines a tower $(\Sigma^n)_{n \geq 0}$ of finite \'{e}tale coverings of $\Sigma^0 = \Omega$. The transition morphisms are equivariant for the action of the stabilising subgroup $\GL_2(F)^{+} = \{g \in \GL_2(F) \mid \det(g) \in \OO^\times\}$. Let $\sT$ be the Bruhat-Tits tree for $\GL_2(F)$, $v$ the central vertex of $\sT$, and $r : \Sigma^1 \rightarrow \Omega \rightarrow \sT$ the retraction map. In this paper we study the open affinoid subset $\Sigma^1_{v} \vcentcolon= r^{-1}(v)$ of $\Sigma^1$. This is stable under the action of $\GL_2(\OO_F)$ and after a finite extension of $K$, $\Sigma^1_{v}$ splits up into $q-1$ geometrically connected components, each isomorphic to $\Sp(B)$, where,
\[
B = A[z] / (z^{q+1} - (x^q - x)), \: \text{ for } \: A = K \left\langle x , \frac{1}{x^q - x} \right\rangle.
\]
The group $\GL_2(F)^+$ acts with two orbits on the set of vertices of $\sT$, and one can show that for any vertex $w$ adjacent to $v$, $\Sigma^1_w \cong \Sigma^1_{v}$. As any such $w$ will be in the other orbit from $v$, $\Sigma^1_w \cong \Sigma^1_{v}$ for all vertices $w \in \sT$, and consequently this open subset often determines global properties of $\Sigma^1$. For example, the first de-Rham cohomology $H^1_{\text{dR}}(\Sigma^1)$ as a representation of $\GL_2(F)$ is determined by $H^1_{\text{dR}}(\Sigma^1_v)$ \cite[Thm. 6.1]{JUNDERHAM}.

Our main result is that $\Pic(\Sigma^1_{v})[p] = 0$ (Theorem \ref{mainthm}). The $p$-adic \'{e}tale cohomology groups of Drinfeld spaces are of considerable interest \cite{CDNStein, CDN1, CDNH, CDNInt, ORL, BOS}, and one immediate consequence of Theorem \ref{mainthm} is a description of the $\GL_2(\OO_F)$-representation $H^1_{\etale}(\Sigma^1_v, \bZ_p(1))$, as the $p$-adic completion of $\OO(\Sigma^1_v)^\times$ (Theorem \ref{mainthm3}). This description is very explicit, as the unit group $\OO(\Sigma^1_v)^\times$ has been described by Junger \cite[Thm. 5.1]{JUNEQN}.

Our main interest in Theorem \ref{mainthm} is the following. A precise statement of the $p$-adic local Langlands correspondence is formulated when $F = \bQ_p$ \cite{COL}, and Dospinescu and Le Bras \cite{DLB} have used this to show that for $F = \bQ_p$ and all $n \geq 1$, the representation $\OO(\Sigma^{n})$ is naturally a coadmissible module over $D(G,K)$, the distribution algebra of $G$. 

In an effort to remove the restriction on $F$, Ardakov and Wadsley show in their forthcoming work \cite{AW} using $p$-adic $\sD$-modules that the representation $\OO(\Sigma^1)$ splits up naturally into a direct sum of coadmissible $D(G,K)$-modules. This decomposition contains $\OO(\Omega)$, and all other components are shown to be topologically irreducible $D(G,K)$-modules. The benefits of this approach over that of \cite{DLB}, are that it holds for general field extensions $F$, is purely local, and establishes topological irreducibility. The obvious disadvantage is that it describes $\OO(\Sigma^n)$ only for $n = 1$. One would like to establish similar results for $\OO(\Sigma^n)$ for $n \geq 2$, where the situation is significantly more complicated. This is partially due to the fact that $\Sigma^n \rightarrow \Sigma^{n-1}$ has degree a power of $p$, whearas the degree of $\Sigma^1 \rightarrow \Omega$ is coprime to $p$. The methods of \cite{AW} use the standard result that $\Pic(\Omega) = 0$, and in attempting to transfer these methods to $\OO(\Sigma^2)$, one considers the group $\Pic(\Sigma^1)[p]$ instead.
Almost nothing is known about $\Pic(\Sigma^1)[p]$, which is strongly expected to be non-zero.
Our result that $\Pic(\Sigma^1_v)[p] = 0$ is therefore slightly suprising. It also provides the first steps towards computing $\Pic(\Sigma^1)[p]$ (by choosing an appropriate \v{C}ech cover), and allows one the possibility of using similar methods to \cite{AW} locally.

In order to prove Theorem \ref{mainthm}, we consider the affine curve $\bfY$ defined by,
\[
xy^q - yx^q = 1 ,
\]
over the residue field of $K$, where $\bF_q$ is the residue field of $F$. This curve was first considered by Drinfeld, who showed that all the discrete series representations of $\SL_2(\bF_q)$ can be realised in the cohomology of $\bfY$ \cite[Pref.]{Bon}. Inspired by this, these ideas were generalised to all reductive groups $\bG$ by Deligne and Lusztig in their landmark paper \cite{DEL}. They introduce what are now called \emph{Deligne-Lusztig} varieties, which assign to $\bG(\bF_q)$ and $w \in W$, the Weyl group, a base space $X(w)$ and a finite covering $Y(w)$, and it is in the \'{e}tale cohomology of $Y(w)$ that the cuspidal representations are realised. These are spaces of considerable interest, and the Picard groups of the base spaces $X(w)$ have been considered in \cite{HAN}. Here we consider $\bfY = Y(w)$ in the special case of $\mathbb{G} = \SL_2$, and $w \neq 1$. It would be interesting to study the Picard groups of $Y(w)$ more generally.

\subsubsection*{Acknowledgements}
The author would like to thank Konstantin Ardakov, Damien Junger and the referee for their comments on this paper. This research was financially supported by the EPSRC.

\section{Deligne-Lusztig Curves}

Throughout this section, let $\bF$ be an algebraic field extension of $\bF_q$. We consider the affine curve,
\[
\bfY = \Spec\left(\frac{\bF[x,y]}{xy^q - yx^q = 1}\right),
\]
and its projective closure,
\[
\bfZ= \Proj \left( \frac{\bF[X,Y,Z]}{XY^q - YX^q = Z^{q+1}} \right).
\]
We also consider the projective curve,
\[
\bfW= \Proj \left( \frac{\bF[U,V,W]}{UV^q + VU^q = W^{q+1}}   \right).
\]

We would first like to show that $\Pic(\bfZ)[p] = 0$.

\begin{lemma}\label{minimallemma}
$\bfZ$ is a smooth integral projective curve over $\bF$. Furthermore, if $\bF_{q^4} \subset \bF$, then $\bfW \cong \bfZ$.
\end{lemma}

\begin{proof}
The polynomial $P(X,Y,Z) = Z^{q+1} - (XY^q - YX^q) \in \bF[X,Y,Z]$ is prime, which follows from Eisenstein's criterion for $P \in \bF[X,Y][Z]$, at the prime ideal $(X)$. Therefore $\bfZ$ is integral. Furthermore, $\bfZ$ is smooth, because the system $\partial_X P = \partial_Y P = \partial_Z P = 0$ has no solutions over $\bfZ(\overline{\bF})$.
For the isomorphism, let $\lambda \in \bF_{q^2}$ with $\lambda^{q-1} = -1$, and let $\mu \in \overline{\bF}$ with $\mu^{q+1} = \lambda^q$. The element $\mu$ lies in  $\bF_{q^4}$, as,
\[
\mu^{q^2} = (\lambda^{q})^{q-1}\mu = - \mu,
\]
so,
\[
\mu^{q^4} = (-\mu)^{q^2} = -(- \mu) = \mu.
\]
Then the claimed isomorphism is given by,
\[
U = X, \quad  V = \lambda Y, \quad W = \mu Z.
\]
Indeed,
\begin{align*}
X (\lambda Y)^q + (\lambda Y) X^q &= \lambda^q (X Y^q - YX^q), \\
&= \lambda^q Z^{q+1} = (\mu Z)^{q+1},
\end{align*}
and similarly $U(\lambda^{-1}V)^q - (\lambda^{-1} V) U^q = (\mu^{-1} W)^{q+1}$.
\end{proof}

\begin{prop}\label{picZ}
$\Pic(\bfZ)[p] = 0$.
\end{prop}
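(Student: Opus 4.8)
The plan is to exploit the fact that $\bfZ_{\bK}$ is either maximal or minimal over $\bK = \bF_{q^2}$ (Lemma~\ref{minimallemma}), since for such curves the Jacobian is very special: the Frobenius eigenvalues are all equal to $\pm\sqrt{q^2} = \pm q$. Concretely, let $\pi$ denote the $q^2$-power Frobenius on $\Jac(\bfZ_{\bK})$. Maximality (resp.\ minimality) of $\bfZ_{\bK}$ forces the characteristic polynomial of $\pi$ on the Tate module to be $(T \mp q)^{2g}$; equivalently $\pi = \pm q$ as an endomorphism of $\Jac(\bfZ_{\bK})$, because a maximal/minimal curve has $\pi \pm q$ killing the whole Jacobian (this is the standard Ihara/Rück-style argument — one checks $\pi \mp q$ acts as zero using that $\deg(\pi\mp q) = |\bfZ_{\bK}(\bK)|$ up to the right normalization, or cites the structure theory of maximal curves). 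Either way, $\pi$ acts as multiplication by an integer prime to $p$ on $\Jac(\bfZ_{\bK})$.

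Now I would argue as follows. First, $\Pic(\bfZ)[p] \hookrightarrow \Pic(\bfZ_{\bK})[p]$: the degree map splits off $\bZ$, the base change $\Pic^0(\bfZ) \to \Pic^0(\bfZ_{\bK})$ is injective since $\bK/\bF$ need not be Galois in general but one can instead pass to $\bar\bF$ where injectivity of $\Pic^0 \to \Pic^0_{\bar\bF}$ on prime-to-($\mathrm{char}$) torsion is standard, and $\Pic^0(\bfZ_{\bK}) = \Jac(\bfZ_{\bK})(\bK)$. So it suffices to show $\Jac(\bfZ_{\bK})(\bK)[p] = 0$, or even $\Jac(\bfZ_{\bar\bF})(\bar\bF)[p] = 0$, i.e.\ that $\Jac(\bfZ_{\bK})$ is supersingular with $p$-rank zero — but more precisely we need the $\bK$-rational $p$-torsion to vanish. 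The key point: $\Jac(\bfZ_{\bK})(\bK) = \ker(\pi - 1)$, and since $\pi = \pm q$ on the abelian variety, $\pi - 1 = \pm q - 1$ is multiplication by an integer coprime to $p$, hence is an isogeny that is an isomorphism on all $p$-power torsion. Therefore $\Jac(\bfZ_{\bK})(\bK)[p^\infty] = \ker(\pm q - 1)[p^\infty] = 0$, and a fortiori $\Jac(\bfZ_{\bK})(\bK)[p] = 0$.

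Chaining these together, $\Pic(\bfZ)[p] = 0$. I would write this up by: (1) recalling $\Pic(\bfZ)[p] \hookrightarrow \Jac(\bfZ_{\bK})(\bK)[p]$ via base change and the degree filtration; (2) invoking Lemma~\ref{minimallemma} plus the structure of maximal/minimal curves to get $\pi = \pm q$ on $\Jac(\bfZ_{\bK})$; (3) concluding $\Jac(\bfZ_{\bK})(\bK)[p] = \ker(\pm q - 1)[p^\infty] = 0$ since $\gcd(q \mp 1, p) = 1$.

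The main obstacle is step~(2): justifying cleanly that a maximal or minimal curve has Frobenius acting as the scalar $\pm q$ on its Jacobian — not merely that the eigenvalues are $\mp q$, which a priori only gives that $(\pi \pm q)$ is nilpotent on the Tate module. The honest argument is that for a maximal curve $|\bfZ_{\bK}(\bK)| = q^2 + 1 + 2gq = \deg(1 + \pi)$ attains equality, which combined with the Cauchy–Schwarz/positivity of the Rosati form forces $\pi = -q$ exactly (and dually $\pi = q$ in the minimal case); alternatively cite that maximal curves over $\bF_{\ell^2}$ have Jacobian isogenous to a power of a supersingular elliptic curve with $\pi$ acting as $-\sqrt\ell$. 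One should also double-check the base-change injectivity in step~(1) when $\bK/\bF$ is infinite or non-Galois, which is why I prefer to run the torsion argument over $\bar\bF$ and use that $\Jac(\bfZ_{\bar\bF})(\bar\bF)[p] = 0$ follows from $\pi_{\bK} = \pm q$ together with the fact that every geometric $p$-torsion point is defined over some finite extension $\bF_{q^{2m}}$, where Frobenius acts as $(\pm q)^m$, still coprime to $p$.
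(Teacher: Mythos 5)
Your argument is correct in substance and follows the same skeleton as the paper's proof: use Lemma \ref{minimallemma} to conclude that the Jacobian of $\bfZ_{\bK}$ has no geometric $p$-torsion, and then pull this back to $\bfZ_{\bF}$ via injectivity of the base-change map on Picard groups. The difference is in the middle step: the paper simply cites a result on maximal/minimal curves (\cite[Cor.~2.5]{TAF}) for $J(\overline{\bK})[p]=0$, whereas you reprove it directly from $\pi=\pm q$ in $\End(\Jac(\bfZ_{\bK}))$ and the observation that $(\pm q)^m-1$ is prime to $p$; this makes the key input self-contained, at the cost of needing the (true, standard) fact that equality in the Weil bound forces $\pi$ to \emph{equal} the scalar $\mp q$, not merely have it as its only eigenvalue --- as you note, this requires semisimplicity of Frobenius on the Tate module (or the Rosati-positivity argument), so you should cite or prove that explicitly. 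Two smaller points to tighten. First, $\bK=\bF_{q^2}$ is in general not an extension of $\bF$ at all (e.g.\ $\bF=\bF_{q^3}$), so the map ``$\Pic^0(\bfZ)\to\Pic^0(\bfZ_{\bK})$'' is not defined; the correct route, which is the one you and the paper both end up taking, is to work with $\overline{\bF}=\overline{\bK}$ and use injectivity of $\Pic(\bfZ_{\bF})\to\Pic(\bfZ_{\overline{\bF}})$. Second, your appeal to ``injectivity on prime-to-characteristic torsion'' is off target, since the torsion at issue is exactly $p$-torsion in characteristic $p$; but no torsion-restricted statement is needed, because $\bfZ$ is proper and geometrically integral, so $\Pic(\bfZ_{\bF})\to\Pic(\bfZ_{\overline{\bF}})$ is injective outright (Hilbert 90 via $H^0(\bfZ_{\overline{\bF}},\OO^\times)=\overline{\bF}^\times$; this is the Stacks reference the paper uses), and torsion classes over $\overline{\bF}$ have degree zero, hence land in $\Jac(\bfZ_{\overline{\bF}})(\overline{\bF})[p]=0$. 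With these repairs your proof goes through.
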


\begin{proof}
By Lemma \ref{minimallemma}, $\bfZ_{\overline{\bF}} \cong \bfW_{\overline{\bF}}$, and thus the group $\Pic(\bfZ_{\overline{\bF}})[p] \cong \Pic(\bfW_{\overline{\bF}})[p] \cong J(\overline{\bF})[p]$, where $J$ is the Jacobian of $\bfW$.
$\bfW$ is known as the Hermitian curve, defined by affine equation $w^{q+1} = v^q + v$, and is maximal over $\bF_{q^2}$ \cite[Lem. 6.4.4]{Hen}, hence $J(\overline{\bF})[p] = 0$ by \cite[Cor. 2.5]{TAF}.
Then, because pullback induces an exact sequence $0 \rightarrow \Pic(\bfZ) \rightarrow \Pic(\bfZ_{\overline{\bF}})$ \cite[Tag 0CC5]{STACK}, and $p$-torsion is left exact, $\Pic(\bfZ)[p] = 0$.
\end{proof}

Our next goal is to establish that $\Pic(\bfY)[p] = 0$.

\begin{lemma}
$\bfZ(\overline{\bF}) \setminus \bfY(\overline{\bF})$ consists of the $q+1$ points,
\[
\sP \coloneqq \{(a:b:0) \mid (a:b) \in \bP^1(\bF_q)\}.
\]
Furthermore, $\sP = \bfZ(\bF_q)$.
\end{lemma}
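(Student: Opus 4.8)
The plan is to realise $\bfY$ as the standard affine chart $\{Z \neq 0\}$ of $\bfZ$, so that $\bfZ(\overline{\bF}) \setminus \bfY(\overline{\bF})$ is exactly the set of points of $\bfZ$ lying on the line $Z = 0$. This identification is immediate from the defining equations: the map $(u,v) \mapsto (u:v:1)$ sends $\bfY$ isomorphically onto $\bfZ \cap \{Z \neq 0\}$, since setting $Z = 1$ in $XY^q - YX^q = Z^{q+1}$ returns $uv^q - vu^q = 1$, and a point of $\bfZ(\overline{\bF})$ fails to lie in this chart precisely when $Z = 0$.

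Next I would compute $\bfZ \cap \{Z = 0\}$ over $\overline{\bF}$ directly. Putting $Z = 0$ gives $XY^q - YX^q = 0$, and factoring yields $XY(Y^{q-1} - X^{q-1}) = 0$. Vanishing of the first two factors contributes the points $(0:1:0)$ and $(1:0:0)$; on the remaining locus $X, Y \neq 0$ we may normalise $X = 1$, so the equation becomes $Y^{q-1} = 1$. Since $\bF_q^\times$ is the full group of $(q-1)$-th roots of unity in $\overline{\bF}$, this gives exactly the $q-1$ points $(1:b:0)$ with $b \in \bF_q^\times$. In total these are $q+1$ distinct points, all of the form $(a:b:0)$ with $(a:b) \in \bP^1(\bF_q)$, and conversely every such point lies on $\bfZ$ (normalise $a = 1$ when $a \neq 0$ and note $b = 0$ or $b^{q-1} = 1$). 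This proves $\bfZ(\overline{\bF}) \setminus \bfY(\overline{\bF}) = \sP$.

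For the equality $\sP = \bfZ(\bF_q)$, the inclusion $\sP \subseteq \bfZ(\bF_q)$ is clear since the coordinates of each point of $\sP$ lie in $\bF_q$. For the reverse inclusion, let $(a:b:c) \in \bfZ(\bF_q)$. If $c \neq 0$, normalise $c = 1$; then $a, b \in \bF_q$ forces $a^q = a$ and $b^q = b$, so $ab^q - ba^q = 0 \neq 1 = c^{q+1}$, a contradiction. Hence $c = 0$, and the first part identifies $(a:b:c)$ as an element of $\sP$.

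I do not expect any genuine obstacle here: the whole argument is an elementary manipulation of the homogeneous equation in coordinates. The only steps meriting a little care are the factorisation $XY^q - YX^q = XY(Y^{q-1} - X^{q-1})$ together with the identification of the solutions of $Y^{q-1} = 1$ in $\overline{\bF}$ with $\bF_q^\times$, and the observation that there are no $\bF_q$-points in the affine chart because $ab^q - ba^q$ vanishes identically on $\bF_q \times \bF_q$.
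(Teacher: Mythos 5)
Your proof is correct and follows essentially the same route as the paper: identify $\bfZ(\overline{\bF}) \setminus \bfY(\overline{\bF})$ with the locus $Z = 0$, solve $XY^q - YX^q = 0$ over $\overline{\bF}$ to get exactly the points $(a:b:0)$ with $(a:b) \in \bP^1(\bF_q)$, and rule out affine $\bF_q$-points since $ab^q - ba^q = ab - ba = 0$ for $a,b \in \bF_q$. The only cosmetic difference is that you factor $XY^q - YX^q = XY(Y^{q-1} - X^{q-1})$ and enumerate roots, whereas the paper divides to obtain $(b/a)^q = b/a$, hence $b/a \in \bF_q$; both are the same elementary computation.
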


\begin{proof}
If $(a:b:c) \in \bfZ(\overline{\bF})$ with $c =0$, then $b^q a - a^q b = 0$, so $b^q a = a^q b$. If $a \neq 0$, then $\left( \frac{b}{a}\right)^q = \frac{b}{a}$, so $\frac{b}{a} \in \bF_q$, and $(a:b) \in \bP^1(\bF_q)$. Similarly, if $b \neq 0$, $(a:b) \in \bP^1(\bF_q)$. Thus $\bfZ(\overline{\bF}) \setminus \bfY(\overline{\bF})= \sP$. To see $\bfZ(\overline{\bF}) \setminus \bfY(\overline{\bF}) = \bfZ(\bF_q)$, there are no points $(a:b:c) \in \bfZ(\bF_q)$ with $c = 1$, because if so then $1 = ab^q - ba^q = ab - ba = 0$, as $a,b \in \bF_q$.
\end{proof}

Therefore the closed points of $\bfZ \setminus \bfY$ are $\sP$ \cite[Prop. 5.4]{GW}, which we enumerate by $\sP = \{P_0, ... ,P_q\}$. From \cite[Ex. 5.12 (a)]{Wei} we have an exact sequence,
\[
\bZ^{q+1} \rightarrow \Cl(\bfZ) \rightarrow \Cl(\bfY) \rightarrow 0,
\]
where the first map sends,
\[
(m_0, ... ,m_q) \mapsto \sum_{i = 0}^q m_i [P_i],
\]
and the second sends, for $I$ a finite set of closed points of $\bfZ$,
\[
\sum_{P \in I} n_P [P] \mapsto \sum_{P \in I \setminus \sP} n_P [P].
\]

Let $\Gamma = \langle [P_0] , ... ,[P_{q}] \rangle \subset \Cl(\bfZ)$ be the image of $\bZ^{q+1}$ in $\Cl(\bfZ)$. The resulting exact sequence,
\[
0 \rightarrow \Gamma \rightarrow \Cl(\bfZ) \rightarrow \Cl(\bfY) \rightarrow 0,
\]
yields the long exact sequence,
\[
0 \rightarrow \Gamma[p] \rightarrow \Cl(\bfZ)[p] \rightarrow \Cl(\bfY)[p] \rightarrow \Gamma / p\Gamma \rightarrow \Cl(\bfZ)/ p \Cl(\bfZ) \rightarrow \Cl(\bfY)/ p\Cl(\bfY) \rightarrow 0,
\]
from the right derived functors of $\Hom_{\bZ}(\bZ / p\bZ, -)$. Then from Proposition \ref{picZ} and the above discussion we have the following.

\begin{prop}\label{exactseq}
There is an exact sequence
\[
0 \rightarrow \Cl(\bfY)[p] \rightarrow \Gamma / p\Gamma \rightarrow \Cl(\bfZ) / p \Cl(\bfZ),
\]
where the map $\Gamma / p\Gamma \rightarrow \Cl(\bfZ) / p \Cl(\bfZ)$ is that induced by the inclusion $\Gamma \hookrightarrow \Cl(\bfZ)$.
\end{prop}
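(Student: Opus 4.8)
The plan is to read off the claimed three-term exact sequence from the six-term long exact sequence already displayed above, using only the vanishing $\Cl(\bfZ)[p] = 0$. First I would note that $\bfZ$ is a smooth integral projective curve by Lemma \ref{minimallemma}, so the canonical map $\Cl(\bfZ) \to \Pic(\bfZ)$ is an isomorphism; combined with Proposition \ref{picZ} this gives $\Cl(\bfZ)[p] = 0$. Since $\Gamma$ is a subgroup of $\Cl(\bfZ)$, it follows immediately that $\Gamma[p] = 0$ as well.

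Next, apply the functors $\Hom_{\bZ}(\bZ/p\bZ,-) = (-)[p]$ and $\Ext^1_{\bZ}(\bZ/p\bZ,-) = (-)/p(-)$ (all higher $\Ext$ vanish, as $\bZ/p\bZ$ has a length-one free resolution over $\bZ$) to the short exact sequence $0 \to \Gamma \to \Cl(\bfZ) \to \Cl(\bfY) \to 0$. This produces
\[
0 \to \Gamma[p] \to \Cl(\bfZ)[p] \to \Cl(\bfY)[p] \to \Gamma/p\Gamma \to \Cl(\bfZ)/p\Cl(\bfZ) \to \Cl(\bfY)/p\Cl(\bfY) \to 0 ,
\]
exactly as in the excerpt. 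Substituting $\Gamma[p] = \Cl(\bfZ)[p] = 0$ collapses the first three terms, so that $\Cl(\bfY)[p]$ injects into $\Gamma/p\Gamma$ with image equal to the kernel of the next arrow $\Gamma/p\Gamma \to \Cl(\bfZ)/p\Cl(\bfZ)$. That is precisely the asserted exactness of $0 \to \Cl(\bfY)[p] \to \Gamma/p\Gamma \to \Cl(\bfZ)/p\Cl(\bfZ)$.

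The only remaining point is to check that the map $\Gamma/p\Gamma \to \Cl(\bfZ)/p\Cl(\bfZ)$ appearing here really is the reduction mod $p$ of the inclusion $\Gamma \hookrightarrow \Cl(\bfZ)$; this is automatic from the functoriality of the connecting maps in the snake lemma used to build the long exact sequence, so it requires no work. I do not expect any genuine obstacle in this step: all the substance sits in Proposition \ref{picZ} (and, behind it, Lemma \ref{minimallemma} and the maximal/minimal-curve input from \cite{TAF}), and the present statement is then a purely homological consequence.
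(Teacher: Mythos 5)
Your argument is correct and is essentially the paper's own proof: both take the short exact sequence $0 \to \Gamma \to \Cl(\bfZ) \to \Cl(\bfY) \to 0$, apply the derived functors of $\Hom_{\bZ}(\bZ/p\bZ,-)$ to get the six-term sequence, and then use $\Cl(\bfZ)[p] = \Pic(\bfZ)[p] = 0$ (Proposition \ref{picZ}, via smoothness of $\bfZ$) to extract the three-term sequence, with the map identified by functoriality. Your explicit remarks that $\Gamma[p]=0$ and that $\Ext^{\geq 2}_{\bZ}$ vanishes are harmless additions the paper leaves implicit.
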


\begin{remark}
We note that if $\bfZ \setminus \bfY$ contained exactly one degree $1$ closed point $Q$, then we could establish that $\Pic(\bfY)[p] = 0$ almost immediately in the following way. In the exact sequence,
\[
\bZ \rightarrow \Cl(\bfZ) \rightarrow \Cl(\bfY) \rightarrow 0,
\]
the map $\bZ \rightarrow \Cl(\bfZ)$ is actually injective and split by the degree homomorphism, hence $\Cl(\bfZ) \cong \bZ \times \Cl(\bfY)$ so,
\[
0 = \Cl(\bfZ)[p] \cong \bZ[p] \times \Cl(\bfY)[p] = \Cl(\bfY)[p].
\]
In particular, this can be applied to show that the class groups of affine dehomogenisations of $\bfZ$ with respect to both $X$ and $Y$ both have no $p$-torsion.
\end{remark}

We want to show that $\Cl(\bfY)[p] = 0$, and so in light of Proposition \ref{exactseq}, we want to show that,
\[
\Gamma / p\Gamma \rightarrow \Cl(\bfZ) / p \Cl(\bfZ),
\]
is injective. In order to do so, we now examine the structure of $\Gamma$. First we compute the principal divisors of some rational functions on $\bfZ$.

\begin{defn}
For $(a:b) \in \bP^1(\bF_q)$, we let $P_{(a:b)}$ be the closed point of $\bfZ$ defined by $(a:b:0) \in \bP^1(\overline{\bF})$.
\end{defn}

\begin{lemma}\label{valuations}
Let $(a:b), (c:d) \in \bP^1(\bF_q)$ with $(a:b) \neq (c:d)$. Then the rational function,
\[
f \coloneqq \frac{bX - aY}{dX - cY},
\]
has associated principal divisor,
\[
(f) = (q+1)[P_{(a:b)}] - (q+1)[P_{(c:d)}].
\]
\end{lemma}

\begin{proof}
Consider the morphism $\zeta : \bfZ \rightarrow \bP^1$ corresponding to the extension of function fields $\bF(\bP^1) \rightarrow \bF(\bfZ)$, which sends,
\[
\frac{S}{T} \mapsto \frac{bX - aY}{dX - cY},
\]
where $\bP^1 = \Proj(\bF[S,T])$, and $\bF(\bP^1) = \bF(S/T)$.
On $\overline{\bF}$-points, $\zeta : \bfZ \rightarrow \bP^1$ is given by,
\[
\zeta(x:y:z) = (bx - ay : dx - cy).
\]
This extension $\bF(\bP^1) \rightarrow \bF(\bfZ)$ has degree $q+1$ because it differs by an automorphism of $\bP^1$ from the extension $\bF(\bP^1) \rightarrow \bF(\bfZ)$, defined by,
\[
\frac{S}{T} \mapsto \frac{X}{Y},
\]
which clearly has degree $q+1$. Let $Q_0, Q_{\infty}$ be the closed points of $\bP^1$ defined by $(0:1), (1:0) \in \bP^1(\overline{\bF})$ respectively. By \cite[Cor. 3.9]{QL}, we have that,
\[
(f) = \zeta^*(S/T) = \zeta^*([Q_0]) - \zeta^*([Q_{\infty}]),
\]
and $\deg(\zeta^*([Q_0])) = \deg(\zeta^*([Q_{\infty}])) = [\bF(\bP^1):\bF(\bfZ)] = q+1$. But $\zeta^*([Q_0])$ is some integer multiple of $[P_{(a:b)}]$ and $\zeta^*([Q_{\infty}])$ some integer multiple of $[P_{(c:d)}]$, hence,
\[
(f) = (q+1)[P_{(a:b)}] - (q+1)[P_{(c:d)}]. \qedhere
\]
\end{proof}

Let $\Gamma^0 \subset \Gamma$ be the degree $0$ subgroup of $\Gamma$, and $\Cl^0(\bfZ) \subset \Cl(\bfZ)$ the degree $0$ subgroup of $\Cl(\bfZ)$.

\begin{lemma}\label{surjhom}
The function $\fn{\phi}{\bZ \times (\bZ / (q+1) \bZ)^q}{\Gamma}$,
\[
\phi: (n_0, ... ,n_q)\mapsto n_0 [P_0] + n_1([P_1] - [P_0]) + ... + n_q([P_q] - [P_0])),
\]
is a surjective homomorphism. In particular, $\Gamma^0$ is a quotient of $(\bZ / (q+1) \bZ)^q$.
\end{lemma}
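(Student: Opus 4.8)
The plan is to verify, in this order, that $\phi$ is well-defined on the stated domain, that it is a group homomorphism, and that it is surjective, and then to deduce the assertion about $\Gamma^0$ from the degree map on $\Cl(\bfZ)$.

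First I would observe that the formula defining $\phi$ is visibly additive in $(n_0, \dots, n_q) \in \bZ^{q+1}$, so it defines a homomorphism $\bZ^{q+1} \to \Gamma$; the only thing to check for well-definedness on $\bZ \times (\bZ/(q+1)\bZ)^q$ is that $(q+1)([P_i] - [P_0]) = 0$ in $\Gamma$ for each $1 \le i \le q$ (the first coordinate is not reduced, so there is nothing to check there). This is exactly what Lemma \ref{valuations} gives: choosing $(c:d), (c':d') \in \bP^1(\bF_q)$ with $P_{(c:d)} = P_i$ and $P_{(c':d')} = P_0$, the function $f = H_{(c:d)}/H_{(c':d')}$ has principal divisor $(q+1)[P_i] - (q+1)[P_0]$, so this divisor vanishes in $\Cl(\bfZ)$ and a fortiori in the subgroup $\Gamma$. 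Thus $\phi$ descends to a well-defined homomorphism on $\bZ \times (\bZ/(q+1)\bZ)^q$.

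Next, for surjectivity, I would use that $\Gamma$ is generated by $[P_0], \dots, [P_q]$ by definition: indeed $[P_0] = \phi(1, 0, \dots, 0)$ and, for $1 \le i \le q$, $[P_i] = [P_0] + ([P_i] - [P_0]) = \phi(1, 0, \dots, 1, \dots, 0)$ with the $1$ in slot $i$, so every generator lies in the image.

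Finally, to identify $\Gamma^0$, I would use the degree map. Each $P_i$ lies in $\sP = \bfZ(\bF_q) \subseteq \bfZ(\bF)$, hence is a degree-one closed point, so $\deg([P_i] - [P_0]) = 0$ and $\deg \phi(n_0, n_1, \dots, n_q) = n_0$. Given $x \in \Gamma^0$, write $x = \phi(n_0, n_1, \dots, n_q)$ using surjectivity; then $n_0 = \deg(x) = 0$, so $x$ lies in the image of $\phi$ restricted to $\{0\} \times (\bZ/(q+1)\bZ)^q$, and conversely every such element has degree zero. Hence $\Gamma^0 = \phi(\{0\} \times (\bZ/(q+1)\bZ)^q)$ is a quotient of $(\bZ/(q+1)\bZ)^q$. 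I do not anticipate a genuine obstacle here: the only non-formal input is Lemma \ref{valuations}, which provides the relations forcing the reduction modulo $q+1$, and everything else is bookkeeping with generators and degrees.
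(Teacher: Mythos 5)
Your proof is correct and follows essentially the same route as the paper: well-definedness comes from the principal divisors $(q+1)[P_i]-(q+1)[P_j]$ of Lemma \ref{valuations}, surjectivity from the fact that $[P_0],\dots,[P_q]$ generate $\Gamma$, and the claim about $\Gamma^0$ from its generation by the $[P_i]-[P_0]$. Your use of the degree map to justify that $\Gamma^0$ is exactly the image of $\{0\}\times(\bZ/(q+1)\bZ)^q$ merely makes explicit a step the paper leaves as an assertion, so there is nothing further to add.
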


\begin{proof}
For each $P_k \in \sP$, we can write $P_k = P_{(a_k:b_k)}$ for some $a_k, b_k \in \bF_q$. For each $0 \leq i \neq j \leq q$, consider the rational function,
\[
f = \frac{b_i X - a_iY}{b_j X - a_j Y}.
\]
Taking the divisor of $f$,
\[
0 = (f) = (q+1)[P_i] - (q+1)[P_j],
\]
in $\Gamma$, by Lemma \ref{valuations}. Therefore, $\phi$ is a well-defined homomorphism, which is surjective because $\{[P_0], ... ,[P_q]\}$ generate $\Gamma$. Finally, as $\Gamma^0=\langle [P_1] - [P_0] , ... ,[P_q] - [P_0] \rangle$, then $\Gamma^0$ is a quotient of $(\bZ / (q+1) \bZ)^q$.
\end{proof}

We are finally in a position to prove the main result of this section.

\begin{thm}\label{picY}
$\Pic(\bfY)[p] = 0$.
\end{thm}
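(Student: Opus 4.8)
The plan is to feed the structure of $\Gamma$ into the exact sequence of Proposition~\ref{exactseq}. Since $\bfY$ is a nonempty open subscheme of the smooth integral curve $\bfZ$ (Lemma~\ref{minimallemma}), it is itself integral and regular, so $\Pic(\bfY) \cong \Cl(\bfY)$ and it suffices to prove $\Cl(\bfY)[p] = 0$. By Proposition~\ref{exactseq} this group is the kernel of the map $\Gamma/p\Gamma \to \Cl(\bfZ)/p\Cl(\bfZ)$ induced by the inclusion $\Gamma \hookrightarrow \Cl(\bfZ)$, so the entire problem reduces to showing that this map is injective.

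First I would determine $\Gamma/p\Gamma$. The degree homomorphism $\deg \colon \Cl(\bfZ) \to \bZ$ restricts to a homomorphism $\Gamma \to \bZ$ with kernel $\Gamma^0$. Because $\bF_q \subseteq \bF$, every point of $\sP = \bfZ(\bF_q)$ is an $\bF$-rational, hence degree-one, closed point of $\bfZ$; in particular $\deg([P_0]) = 1$, so $\deg \colon \Gamma \to \bZ$ is surjective and the sequence $0 \to \Gamma^0 \to \Gamma \xrightarrow{\deg} \bZ \to 0$ is exact, and split since $\bZ$ is free. By Lemma~\ref{surjhom}, $\Gamma^0$ is a quotient of $(\bZ/(q+1)\bZ)^q$, hence annihilated by $q+1$; since $p \mid q$ we have $\gcd(p,q+1) = 1$, so multiplication by $p$ is invertible on $\Gamma^0$ and $\Gamma^0/p\Gamma^0 = 0$. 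Therefore $\deg$ induces an isomorphism $\Gamma/p\Gamma \xrightarrow{\sim} \bZ/p\bZ$; in particular $\Gamma/p\Gamma$ is cyclic of order $p$, generated by the class of $[P_0]$.

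To finish, I would note that $\deg$ also induces a map $\Cl(\bfZ)/p\Cl(\bfZ) \to \bZ/p\bZ$, and, since the degree map on $\Gamma$ is the restriction of that on $\Cl(\bfZ)$, the composite $\Gamma/p\Gamma \to \Cl(\bfZ)/p\Cl(\bfZ) \to \bZ/p\bZ$ is precisely the isomorphism of the previous paragraph. Hence $\Gamma/p\Gamma \to \Cl(\bfZ)/p\Cl(\bfZ)$ is injective, so $\Cl(\bfY)[p] = 0$ by Proposition~\ref{exactseq}, and $\Pic(\bfY)[p] = 0$.

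Once Propositions~\ref{picZ} and~\ref{exactseq} and Lemma~\ref{surjhom} are in place, the remaining argument is essentially bookkeeping, so I do not expect a serious obstacle here; the one load-bearing observation is that the generator $[P_0]$ of $\Gamma/p\Gamma$ has degree prime to $p$ and therefore cannot become divisible by $p$ in $\Cl(\bfZ)$. This is exactly where the $\bF_q$-rationality (degree one) of the points of $\sP$ is used: if those points had degrees all divisible by $p$, the degree map would fail to detect $[P_0]$ and this strategy would break down, so some honest input about $\sP$ is unavoidable.
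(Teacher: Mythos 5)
Your proof is correct and follows essentially the same route as the paper: both rest on Proposition~\ref{exactseq}, the fact from Lemma~\ref{surjhom} that $\Gamma^0$ is killed by $q+1$ (so $\Gamma^0/p\Gamma^0=0$), and the degree-one point $[P_0]$. The only difference is packaging — you compose with the degree map modulo $p$ to see injectivity of $\Gamma/p\Gamma \to \Cl(\bfZ)/p\Cl(\bfZ)$, while the paper makes the splitting $\Cl(\bfZ)\cong \Cl^0(\bfZ)\times\bZ$ explicit in a commutative diagram; the mathematical content is the same.
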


\begin{proof}
We can split the degree homomorphism with $[P_0]$, as $[P_0]$ has degree $1$ and $\langle [P_0] \rangle$ is free \cite[Ex. 5.12 (b)]{Wei}. Then,
\begin{align*}
\psi : \Cl(\bfZ) &\rightarrow \Cl^0(\bfZ) \times \bZ, \\
Q &\mapsto (Q - \deg(Q) [P_0], \deg(Q)),
\end{align*}
is an isomorphism, which restricts to,
\[
\Gamma \cong \Gamma^0 \times \bZ.
\]
We then obtain the following commutative diagram,
\[\begin{tikzcd}
	{\frac{\Gamma}{p\Gamma}} & {\frac{\Gamma^0 \times \bZ}{p(\Gamma^0 \times \bZ)}} & {\frac{\Gamma^0}{p\Gamma^0} \times \frac{\bZ}{p\bZ}} \\
	{\frac{\Cl(\bfZ)}{ p\Cl(\bfZ)}} & {\frac{\Cl^0(\bfZ) \times \bZ}{p(\Cl^0(\bfZ) \times \bZ)}} & {\frac{\Cl^0(\bfZ)}{p\Cl^0(\bfZ)} \times \frac{\bZ}{p\bZ}}
	\arrow["\sim",from=1-1, to=1-2]
	\arrow["\sim",from=1-2, to=1-3]
	\arrow[from=1-2, to=2-2]
	\arrow[from=1-3, to=2-3]
	\arrow["\sim",from=2-2, to=2-3]
	\arrow["\sim",from=2-1, to=2-2]
	\arrow[from=1-1, to=2-1].
\end{tikzcd}\]

Here, the vertical maps are induced from the inclusions of $\Gamma$ into $\Cl(\bfZ)$ and of $\Gamma^0$ into $\Cl^0(\bfZ)$, the left horizontal maps are induced by $\psi$, and the right horizontal maps are the standard identifications.

Now, by Lemma \ref{surjhom}, $\Gamma^0$ is a quotient of $(\bZ / (q+1)\bZ)^{q}$, thus $\Gamma^0 / p \Gamma^0 = 0$. Consequently, $\Gamma / p\Gamma \rightarrow \Cl(\bfZ) / p \Cl(\bfZ)$ is an injection. Therefore, $\Cl(\bfY)[p] = 0$, by the exact sequence of Proposition \ref{exactseq}.
\end{proof}

\section{Rigid Curves}\label{affinoid}

Let $F$ be a finite extension of $\bQ_p$ with uniformiser $\pi$ and residue field $\bF_q$. Let $K$ be a complete field extension of $F$ with residue field $\bF$, such that $\bF$ is an algebraic extension of $\bF_q$. Let $R$ be the ring of integers of $K$ and $\varpi \in K$ an element with $0 < |\varpi| < 1$.

Let $A$ be the affinoid algebra,
\[
A = K \left\langle x , \frac{1}{x^q - x} \right\rangle,
\]
for which the associated rigid space $\Sp(A)$ has admissible formal model $\Spf(A_0)$, where,
\[
A_0 = R \left\langle x , \frac{1}{x^q - x} \right\rangle.
\]

Let $u \coloneqq x^q - x \in A_0^\times \subset A^\times$, and let $B$ be the affinoid algebra,
\[
B \coloneqq  A[z] / (z^{q+1} - u).
\]
Consider the ring extension,
\[
B_0 \coloneqq  A_0[z] / (z^{q+1} - u).
\]
$B_0$ is $\varpi$-torsion free, and the natural map,
\[
B_0 = A_0[z] / (z^{q+1} - u) \rightarrow \left. R \left\langle x , \frac{1}{x^q - x}, z \right\rangle \middle/ \left( z^{q+1} - u \right) \right.,
\]
is an isomorphism (because $u \in A_0^\times$ is a unit), hence $B_0$ is an admissible $R$-algebra. The special fibre of $\Spf(B_0)$ is,
\[
\Spec(B_0 \otimes_{R} \bF) = \Spec \left( \bF\left[y, 1 / v, t \right] / (t^{q+1} - v)  \right),
\]
where $v = y^q - y$, and the generic fibre of $\Spf(B_0)$ is $\Sp(B_0 \otimes_R K) = \Sp(B)$.

\begin{lemma}\label{comppicgroups}
$\Pic(\Sp(B)) \cong \Pic(\bfY)$.
\end{lemma}
\begin{proof}
First note that there is an isomorphism of $\bF$-algebras,
\[
\bF[r,s] / (rs^q - sr^q - 1) \xrightarrow{\sim} \bF\left[y, 1/v, t \right] / (t^{q+1} - v),
\]
given by $r \mapsto 1/t$, $s \mapsto y / t$, with inverse $y \mapsto s/r$, $t \mapsto 1/r$. Thus $\Spec(B_0 \otimes_R \bF) \cong \bfY$, and $\Spf(B_0)$ is a smooth admissible formal model of $\Sp(B)$. Therefore by \cite[Lem. 3.6]{HEUER}, the natural maps,
\[
\Pic(\Sp(B)) \xleftarrow{\sim} \Pic(\Spf(B_0)) \xrightarrow{\sim} \Pic(\Spec(B_0 \otimes_R \bF)),
\]
are isomorphisms and we're done.
\end{proof}

We can now state our main results. If $K$ contains $\breve{F}$ the completion of the maximal unramified extension of $F$, then we can consider the rigid analytic space $\Sigma^1$ defined over any such $K$. For an overview of the construction and properties of $\Sigma^1$ see \cite[\S 2]{JUNEQN}. If $v \in \sT$ is the central vertex of the Bruhat-Tits tree, then the open affinoid subset $\Sigma^1_{v} \vcentcolon= r^{-1}(v) \subset \Sigma^1$ has coordinate ring isomorphic to,
\[
\OO(\Sigma^1_v) \cong A[z] / (z^{q^2 - 1} - (\pi u^{q-1})),
\]
by \cite[Thm. 2.7]{JUNEQN}.

Let $\omega$ be a primitive $(q^2 - 1)$st root of $\pi$ in $\overline{F}$. From now on we strengthen our assumption on the complete field extension $K$ of $F$ and assume that,
\[
\textit{$K$ contains $\breve{F}(\omega)$ and $\bF$ is an algebraic extension of $\bF_q$.}
\]
We note that this forces $\bF$ to be an algebraic closure of $\bF_q$, and that this assumption holds for any complete field extension $K$ of $\breve{F}(\omega)$ which is contained in $\bC_p$.

\begin{thm}\label{mainthm}
$\Pic(\Sigma^1_v)[p] = 0$.
\end{thm}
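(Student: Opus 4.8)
The plan is to reduce the statement to a mixture of the results already established about $\bfY$ and about the special fibre, using the relation between $\Pic$ of an affinoid and $\Pic$ of its canonical reduction. Since $K$ is discretely valued with residue field $\bF = \overline{\bF_q}$, and $\OO(\Sigma^1_v) \cong B^{q-1}$, it suffices to prove $\Pic(\Sp B)[p] = 0$. The affinoid $\Sp B$ has good reduction: by Lemma~\ref{geomconn} the special fibre $\operatorname{Spec}(\tilde B)$ is a smooth integral affine curve, and by Lemma~\ref{affiso} it is isomorphic to $\bfY$, for which Theorem~\ref{picY} gives $\Pic(\bfY)[p] = 0$. So the heart of the matter is a specialization (reduction) map $\Pic(\Sp B) \to \Pic(\operatorname{Spec}\tilde B) = \Pic(\bfY)$ and control of its kernel.

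First I would set up the formal model: let $\mathfrak{B}$ be the $\OO_K$-algebra $\OO_K\langle x, u^{-1}\rangle[z]/(z^{q+1}-u)$, so that $\Spf \mathfrak{B}$ is a formal $\OO_K$-scheme with generic fibre $\Sp B$ and special fibre $\operatorname{Spec}\tilde B \cong \bfY$; normality of $\mathfrak{B}$ follows from the same root-extraction argument as in Lemma~\ref{geomconn}. Then I would invoke the standard reduction exact sequence for line bundles on an affinoid with a normal formal model — a line bundle on $\Sp B$ whose reduction is trivial is itself trivial up to the failure of units to lift, i.e. there is an exact sequence
\[
\OO(\Sp B)^\times \big/ \OO(\Spf\mathfrak{B})^\times \longrightarrow \Pic(\Spf\mathfrak{B}) \longrightarrow \Pic(\Sp B) \longrightarrow \Pic(\operatorname{Spec}\tilde B),
\]
or more directly an injection $\ker\big(\Pic(\Sp B)\to\Pic(\bfY)\big) \hookrightarrow H^1$ of a suitable complex of units. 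Concretely: cover $\Sp B$ by the standard affinoid subdomains refining the reduction cover; a line bundle trivial on $\bfY$ is given by a cocycle of units on overlaps which reduces to a coboundary, and since $\bfY$ is affine with trivial Picard group the obstruction lives in $H^1$ of units reducing to $1$, a group that is $p$-divisible (units congruent to $1$ modulo the maximal ideal form a pro-$p$ or uniquely $p$-divisible group when $p$ is as in our setting) — hence has no $p$-torsion quotient obstructing lifting. The cleanest route is probably to argue that $\Pic(\Sp B) \hookrightarrow \Pic(\bfY)$ directly, using that $\mathfrak{B}$ is a regular (in fact smooth over $\OO_K$) formal scheme of relative dimension $1$ so that every line bundle on the generic fibre extends to the formal model, and every line bundle on a regular $2$-dimensional formal scheme is determined by its restriction to the special fibre because the complement of the special fibre in $\Spec$ of the local rings has codimension $\geq 2$ — more precisely, $\Pic(\Spf\mathfrak{B}) \to \Pic(\bfY)$ is injective because $\mathfrak{B}$ is normal and the special fibre is a principal divisor, and $\Pic(\Spf\mathfrak{B}) \to \Pic(\Sp B)$ is surjective, so on $p$-torsion we get $\Pic(\Sp B)[p] \hookrightarrow \Pic(\bfY)[p] = 0$ once we check the kernel of the surjection contributes no $p$-torsion.

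I expect the main obstacle to be exactly this last point: justifying that line bundles on $\Sp B$ lift to the formal model $\Spf\mathfrak{B}$ and that the ambiguity (the cokernel of $\OO(\Sp B)^\times \to \Pic(\Spf\mathfrak{B})$, equivalently the kernel of $\Pic(\Spf\mathfrak B) \to \Pic(\Sp B)$, which is measured by $B^\times/\mathfrak B^\times$ modulo the units that already reduce to $\tilde B^\times$) carries no $p$-torsion. This requires knowing $B^\times$ and $\mathfrak{B}^\times$ well enough; fortunately Junger's description of $\OO(\Sigma^1_v)^\times$ (\cite[Thm.~6.24]{JUN}) gives $B^\times$ explicitly, so one computes $B^\times/(K^\times \cdot \mathfrak{B}^\times)$ and checks it is torsion-free, or at least $p$-torsion-free. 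An alternative that may sidestep unit computations: use that $\Sp B \to \Sp A$ is finite étale of degree $q+1$ prime to $p$, so that the trace/transfer argument gives that $\Pic(\Sp B)[p]$ injects into (a summand of) $\Pic(\Sp A)[p]$ twisted appropriately — but $\Sp A$ is an annulus-type affinoid with $\Pic(\Sp A) = 0$, so this would need the covering structure to descend $p$-torsion, which only works modulo the $q+1$ being invertible. Combining the two (good reduction giving $\hookrightarrow \Pic(\bfY)[p] = 0$, and the prime-to-$p$ degree cover controlling the reduction kernel) should close the argument.
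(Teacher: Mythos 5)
Your strategy coincides with the paper's: reduce to a single component $B$ via $\OO(\Sigma^1_v)\cong B^{q-1}$, compare $\Pic(B)$ with the Picard group of its reduction, identify that reduction with $\bfY$ (Lemma \ref{affiso}), and quote Theorem \ref{picY}. The paper closes the comparison step in one stroke by citing \cite[Ex.\ 3.7.4(4)]{VDP}, whose hypotheses --- $K$ discretely valued, $B$ and $\tilde{B}$ integral domains --- are exactly what Lemma \ref{geomconn} supplies. So the only content of your proposal beyond the paper is your attempt to prove that comparison by hand, and that is where the gaps are.

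The point you flag as the main obstacle --- that the kernel of the specialization comparison carries no $p$-torsion --- needs no unit computations, no appeal to Junger's description of $\OO(\Sigma^1_v)^\times$, and no $p$-divisibility of $1$-units: the kernel is zero. Let $\pi_K$ be a uniformiser of $K$ and $\mathfrak{B}=\OO_K\langle x,u^{-1}\rangle[z]/(z^{q+1}-u)$. As you note, $\mathfrak{B}$ is smooth over $\OO_K$ (it is \'etale over $\OO_K\langle x,u^{-1}\rangle$ because $(q+1)z^{q}$ is a unit there), hence regular, so $\Pic=\Cl$ on $\Spec(\mathfrak{B})$ and on its open subscheme $\Spec(B)=\Spec(\mathfrak{B})\setminus V(\pi_K)$. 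Restriction $\Cl(\Spec\mathfrak{B})\to\Cl(\Spec B)$ is then surjective with kernel generated by the classes of the irreducible components of $V(\pi_K)$; since $\mathfrak{B}/\pi_K\mathfrak{B}\cong\tilde{B}$ is a domain (Lemma \ref{geomconn}), $V(\pi_K)$ is a single reduced prime divisor equal to $\operatorname{div}(\pi_K)$, so the kernel vanishes and $\Pic(\mathfrak{B})\xrightarrow{\sim}\Pic(B)$. On the other side, $\Pic(\mathfrak{B})\xrightarrow{\sim}\Pic(\tilde{B})$ holds because $\mathfrak{B}$ is Noetherian and $\pi_K$-adically complete, so invertible modules lift and a lift of a generator generates; completeness is what enters here, not --- as you write --- ``because the special fibre is a principal divisor'': you have attached the right reasons to the wrong arrows. (You also do not need $\mathfrak{B}=B^{\circ}$; any model with this special fibre will do.) Finally, your fallback via the degree-$(q+1)$ cover $f:\Sp(B)\to\Sp(A)$ cannot close the argument in the direction you need: the norm map gives $\Nm\circ f^{*}=(q+1)$ on $\Pic(\Sp(A))$, which controls the base in terms of the cover and not conversely, and $p$-torsion classes on $\Sp(B)$ lying in nontrivial $\mu_{q+1}$-isotypic components are invisible downstairs, so $\Pic(\Sp(A))=0$ says nothing about $\Pic(\Sp(B))[p]$. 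With the kernel identified as zero, your ``cleanest route'' paragraph becomes a correct proof, and is essentially the content of the exercise the paper cites.
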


\begin{proof}
Because $K$ contains $\omega$,
\[
\OO(\Sigma^1_{v}) \cong B^{q-1},
\]
and therefore,
\[
\Pic(\Sigma^1_v) \cong \Pic (\Sp(B^{q-1})) = \Pic(\Sp(B))^{q-1} \cong \Pic(\bfY)^{q-1},
\]
by Lemma \ref{comppicgroups}. But then $\Pic(\Sigma^1_v)[p] \cong \Pic(\bfY)[p]^{q-1}$, which is zero by Theorem \ref{picY}.
\end{proof}

Recall that $\Sigma^1_v = r^{-1}(v)$ is the pre-image of $v$, the central vertex of the Bruhat-Tits tree. The vertex $v$ is fixed by $\GL_2(\OO_F)$, and because $r$ is equivariant, $\GL_2(\OO_F)$ acts on $\Sigma^1_v$.

\begin{cor}\label{mainthm2}
The natural map,
\[
\OO(\Sigma^1_v)^{\times} / \OO(\Sigma^1_v)^{\times p^n} \rightarrow H^1_{\etale}(\Sigma^1_v, \mu_{p^n}),
\]
arising from the Kummer exact sequence is an isomorphism of $\GL_2(\OO_F)$-modules.
\end{cor}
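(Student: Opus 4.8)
The plan is to deduce the statement directly from the Kummer exact sequence on the \'etale site together with Theorem \ref{mainthm}. First I would record that $K$, being a complete extension of $\bQ_p$, has characteristic $0$, so $p$ is invertible in $\OO(\Sigma^1_v)$ and hence in the structure sheaf of $\Sigma^1_v$. Consequently the $p^n$-th power map $\bG_m \rightarrow \bG_m$ is a finite \'etale surjection of \'etale sheaves on $\Sigma^1_v$ with kernel $\mu_{p^n}$ (for any affinoid $\Sp(R)$ and $f \in R^\times$, the $R$-algebra $R[T]/(T^{p^n} - f)$ is finite \'etale over $R$ and splits off a $p^n$-th root of $f$), so we have a short exact sequence of \'etale sheaves
\[
1 \rightarrow \mu_{p^n} \rightarrow \bG_m \xrightarrow{\; (-)^{p^n}\;} \bG_m \rightarrow 1.
\]
Its long exact cohomology sequence, using $H^0_{\etale}(\Sigma^1_v, \bG_m) = \OO(\Sigma^1_v)^\times$, gives the exact sequence
\[
1 \rightarrow \OO(\Sigma^1_v)^\times / \OO(\Sigma^1_v)^{\times p^n} \rightarrow H^1_{\etale}(\Sigma^1_v, \mu_{p^n}) \rightarrow H^1_{\etale}(\Sigma^1_v, \bG_m)[p^n] \rightarrow 0,
\]
in which the first map is exactly the Kummer map appearing in the statement.

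Next I would identify $H^1_{\etale}(\Sigma^1_v, \bG_m) \cong \Pic(\Sigma^1_v)$. This is the rigid-analytic analogue of Hilbert's Theorem 90: a $\bG_m$-torsor for the \'etale topology on a rigid space is the same datum as an invertible $\OO$-module, by \'etale descent for quasi-coherent modules, and since $\Sigma^1_v$ is affinoid these invertible modules are classified by the ordinary Picard group $\Pic(\OO(\Sigma^1_v)) = \Pic(\Sigma^1_v)$. I would cite the relevant comparison in the literature on \'etale cohomology of rigid (or adic) spaces for this. Granting it, the connecting term becomes $\Pic(\Sigma^1_v)[p^n]$, and it remains to see this vanishes: by Theorem \ref{mainthm}, $\Pic(\Sigma^1_v)[p] = 0$, so multiplication by $p$ is injective on $\Pic(\Sigma^1_v)$, hence so is multiplication by $p^n$, whence $\Pic(\Sigma^1_v)[p^n] = 0$. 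Therefore the Kummer map is an isomorphism.

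For $\GL_2(\OO_F)$-equivariance: the group fixes the vertex $v$ and $r$ is equivariant, so $\GL_2(\OO_F)$ acts on $\Sigma^1_v$ by rigid automorphisms, hence compatibly on $\OO(\Sigma^1_v)^\times$, on $H^\ast_{\etale}(\Sigma^1_v, -)$ and on $\Pic(\Sigma^1_v)$; the Kummer sequence and its associated long exact sequence are functorial for pullback along these automorphisms, so every arrow above, in particular the isomorphism, is $\GL_2(\OO_F)$-equivariant. The only point needing genuine care is the identification $H^1_{\etale}(\Sigma^1_v, \bG_m) \cong \Pic(\Sigma^1_v)$ in the rigid-analytic setting (and, to a lesser extent, the exactness of the Kummer sequence on the rigid \'etale site); everything else is formal.
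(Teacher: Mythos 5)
Your proposal is correct and follows essentially the same route as the paper: invoke the Kummer exact sequence on the rigid \'etale site (valid since $K$ has characteristic $0$), take the long exact cohomology sequence, identify $H^1_{\etale}(\Sigma^1_v,\bG_m)$ with $\Pic(\Sigma^1_v)$, and kill the obstruction term using Theorem \ref{mainthm}. Your explicit remark that $\Pic(\Sigma^1_v)[p]=0$ forces $\Pic(\Sigma^1_v)[p^n]=0$ is a small but welcome detail that the paper leaves implicit.
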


\begin{proof}
Because $K$ has characteristic $0$, we can consider the Kummer exact sequence for rigid analytic spaces \cite[Sect. 3.2]{PJ}. Then the result follows from Theorem \ref{mainthm} after taking the long exact sequence in \'{e}tale cohomology, using that $\Pic(\Sigma^1_v) \cong H^1_{\etale}(\Sigma^1_v, \bG_m)$ \cite[Prop. 3.2.4]{PJ}.
\end{proof}

As a consequence, we may now compute $H^1_{\etale}(\Sigma^1_v, \bZ_p(1))$ as the $p$-adic completion of $\OO(\Sigma^1_v)^\times$. This is completely explicit, as the group $\OO(\Sigma^1_v)^{\times}$ has been computed by Junger \cite[Thm. 5.1]{JUNEQN}.

\begin{thm}\label{mainthm3}
There is an isomorphism of $\bZ_p$-linear representations of $\GL_2(\OO_F)$,
\[
H^1_{\etale}(\Sigma^1_v, \bZ_p(1)) \cong \varprojlim_{n \geq 1} \OO(\Sigma^1_v)^{\times} / \OO(\Sigma^1_v)^{\times p^n}.
\]
\end{thm}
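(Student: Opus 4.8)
The plan is to deduce Theorem \ref{mainthm3} from Corollary \ref{mainthm2} by passing to the inverse limit over $n$. Recall that by definition $\bZ_p(1) = \varprojlim_n \mu_{p^n}$ as a pro-sheaf on the étale site, so that $H^1_{\etale}(\Sigma^1_v, \bZ_p(1))$ is, essentially by definition of continuous étale cohomology, computed by a short exact sequence relating it to $\varprojlim_n H^1_{\etale}(\Sigma^1_v, \mu_{p^n})$ and $\varprojlim^1_n H^0_{\etale}(\Sigma^1_v, \mu_{p^n})$. Concretely, Jannsen's exact sequence for continuous étale cohomology gives
\[
0 \rightarrow {\varprojlim_n}^1 H^0_{\etale}(\Sigma^1_v, \mu_{p^n}) \rightarrow H^1_{\etale}(\Sigma^1_v, \bZ_p(1)) \rightarrow \varprojlim_n H^1_{\etale}(\Sigma^1_v, \mu_{p^n}) \rightarrow 0.
\]
So the first step is to check that the $\varprojlim^1$ term vanishes. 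Since $\mu_{p^n}(\Sigma^1_v) = H^0_{\etale}(\Sigma^1_v, \mu_{p^n})$ is the group of $p^n$-th roots of unity in $\OO(\Sigma^1_v)$, and the transition maps $\mu_{p^{n+1}} \to \mu_{p^n}$ are the $p$-th power maps, the system $(H^0_{\etale}(\Sigma^1_v,\mu_{p^n}))_n$ satisfies the Mittag-Leffler condition (each group is finite, in fact of bounded order since $\OO(\Sigma^1_v)$ contains only finitely many roots of unity, or one can simply note finiteness suffices). Hence $\varprojlim^1$ of this system vanishes and $H^1_{\etale}(\Sigma^1_v, \bZ_p(1)) \xrightarrow{\sim} \varprojlim_n H^1_{\etale}(\Sigma^1_v, \mu_{p^n})$.

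Next I would invoke Corollary \ref{mainthm2}, which identifies $H^1_{\etale}(\Sigma^1_v, \mu_{p^n})$ with $\OO(\Sigma^1_v)^\times / \OO(\Sigma^1_v)^{\times p^n}$ as $\GL_2(\OO_F)$-modules, compatibly in $n$: the transition maps on the right are the natural surjections $\OO(\Sigma^1_v)^\times/\OO(\Sigma^1_v)^{\times p^{n+1}} \twoheadrightarrow \OO(\Sigma^1_v)^\times/\OO(\Sigma^1_v)^{\times p^n}$, and one should check these correspond under the Kummer isomorphism to the maps induced by $\mu_{p^{n+1}} \to \mu_{p^n}$ — this is a standard compatibility in the Kummer sequence, following from the commutative ladder of Kummer sequences for the $p^n$-th and $p^{n+1}$-th power maps. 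Passing to the inverse limit then gives the $\GL_2(\OO_F)$-equivariant isomorphism
\[
H^1_{\etale}(\Sigma^1_v, \bZ_p(1)) \cong \varprojlim_n \OO(\Sigma^1_v)^\times / \OO(\Sigma^1_v)^{\times p^n},
\]
which is the claim.

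The main obstacle, such as it is, is the bookkeeping around continuous étale cohomology: one must be slightly careful that $H^1$ with $\bZ_p(1)$-coefficients really is the continuous étale cohomology à la Jannsen and that Jannsen's $\varprojlim^1$ exact sequence applies in the setting of the étale site of a rigid analytic space — but this is by now standard (see the references to the Kummer sequence for rigid spaces already cited), and the requisite Mittag-Leffler property for the $H^0$-system is immediate from finiteness. I would also remark that $\varprojlim_n \OO(\Sigma^1_v)^\times/\OO(\Sigma^1_v)^{\times p^n}$ is exactly the $p$-adic completion of $\OO(\Sigma^1_v)^\times$, so that, combined with Junger's explicit description of $\OO(\Sigma^1_v)^\times$ in \cite[Thm. 6.24]{JUN}, this makes the representation $H^1_{\etale}(\Sigma^1_v, \bZ_p(1))$ completely explicit.
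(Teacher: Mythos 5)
Your argument is correct and takes essentially the same route as the paper: identify each $H^1_{\etale}(\Sigma^1_v, \mu_{p^n})$ with $\OO(\Sigma^1_v)^{\times}/\OO(\Sigma^1_v)^{\times p^n}$ via Corollary \ref{mainthm2}, check the compatibility of these isomorphisms with the transition maps, and pass to the inverse limit. The only difference is that the paper takes $\varprojlim_n H^1_{\etale}(\Sigma^1_v, \mu_{p^n})$ as the definition of $H^1_{\etale}(\Sigma^1_v, \bZ_p(1))$, so your extra step via Jannsen's sequence and the vanishing of $\varprojlim^1_n H^0_{\etale}(\Sigma^1_v, \mu_{p^n})$ (which is fine, by finiteness of each $\mu_{p^n}(\OO(\Sigma^1_v))$) is not needed there, though it does show the statement also holds for continuous \'{e}tale cohomology.
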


\begin{proof}
For all $n \geq 1$ the diagram,
\[\begin{tikzcd}
	\OO(\Sigma^1_v)^{\times} / \OO(\Sigma^1_v)^{\times p^{n+1}} & H^1_{\etale}(\Sigma^1_v, \mu_{p^{n+1}}) \\
	\OO(\Sigma^1_v)^{\times} / \OO(\Sigma^1_v)^{\times p^n} & H^1_{\etale}(\Sigma^1_v, \mu_{p^n})
	\arrow[from=1-1, to=2-1]
	\arrow[from=2-1, to=2-2]
	\arrow[from=1-2, to=2-2]
	\arrow[from=1-1, to=1-2]
\end{tikzcd}\]
commutes. Then by the definition of $H^1_{\etale}(\Sigma^1_v, \bZ_p(1))$ and Corollary \ref{mainthm2},
\[
H^1_{\etale}(\Sigma^1_v, \bZ_p(1)) = \varprojlim_{n \geq 1} H^1_{\etale}(\Sigma^1_v, \mu_{p^n}) \xleftarrow{\sim} \varprojlim_{n \geq 1} \OO(\Sigma^1_v)^{\times} / \OO(\Sigma^1_v)^{\times p^n}. \qedhere
\]
\end{proof}

\bibliography{biblio}{}
\bibliographystyle{plain}
\end{document}